\newtheorem{thm}{Theorem}[section]
\newtheorem{cor}[thm]{Corollary}
\newtheorem{prop}[thm]{Proposition}
\newtheorem{lem}[thm]{Lemma}
\newtheorem{defn}[thm]{Definition}
\newtheorem{assumption}[thm]{Assumption}
\theoremstyle{remark}
\newtheorem{rmk}[thm]{Remark}
\newcommand{\alias}[2]{\providecommand{#1}{} \renewcommand{#1}{#2}}
\newcommand{\F}{\mathcal{F}}
\newcommand{\pr}{\mathbb{P}}
\newcommand{\N}{\mathbb{N}}
\newcommand{\R}{\mathbb{R}}
\alias{\U}{\mathbb{U}}
\alias{\G}{\mathcal{G}}
\newcommand{\tsum}{{\textstyle\sum}}
\newcommand{\dsum}{{\displaystyle\sum\nolimits}}
\newcommand{\tl}{{\tilde{l}}}
\newcommand{\tq}{{\tilde{q}}}
\newcommand{\bt}{\bar{t}}
\newcommand{\bx}{\bar{x}}
\newcommand{\by}{\bar{y}}
\newcommand{\bz}{\bar{z}}
\newcommand{\sK}{{\mathcal K}}
\newcommand{\sL}{{\mathcal L}}
\newcommand{\sP}{{\mathcal P}}
\newcommand{\sS}{{\mathcal S}}
\newcommand{\sV}{{\mathcal V}}
\newcommand{\PP}{{\mathbb P}}
\newcommand{\LL}{{\mathbb L}}
\newcommand{\FF}{{\mathbb F}}
\newcommand{\one}{1}
\newcommand{\ind}[1]{ \one_{{#1}}}
\newcommand{\lmr}[3]{{{}^{#1}{#2}^{#3}}}
\DeclareMathOperator{\dom}{dom}
\DeclareMathOperator{\Cl}{Cl}
\newcommand{\bmo}{\mathrm{bmo}}
\newcommand{\BMO}{\mathrm{BMO}}
\DeclareMathOperator{\Tr}{Tr}
\newcommand{\tot}{\frac{1}{2}}
\newcommand{\Bset}[1]{\Big\{#1\Big\}}
\newcommand{\set}[1]{\{#1\}}
\newcommand{\sq}[1]{\set{#1}_{n\in\N}}
\newcommand{\abs}[1]{| #1 |}
\newcommand{\ab}[1]{\langle #1 \rangle}
\newcommand{\norm}[1]{{||#1||}}
\newcommand{\eps}{\varepsilon}
\newcommand{\ld}{\lambda}
\newcommand{\toucp}{\stackrel{\mathrm{ucp}}{\rightarrow}}
\newcommand{\rupk}[1]{{#1}^{k}}
\newcommand{\rxk}{\rupk{X}}
 \newcommand{\ryk}{\rupk{Y}}
 \newcommand{\Zk}{\rupk{Z}}
\newcommand{\fk}{\rupk{f}}
\newcommand{\define}[1]{{\textbf{#1}}}
\newcommand{\eand}{\text{ and }}
\newcommand{\ewhere}{\text{ where }}
\newcommand{\ito}{It\^ o}
\newcommand{\itos}{It\^ o's}
\newcommand{\sam}{\{a_m\}}
\numberwithin{equation}{section}
\newcommand{\lop}{(\ld \otimes \pr)}
\newcommand{\lp}[1]{ \lop  [ #1 ] }
\newcommand{\lpB}[1]{ \lop \Big[ #1 \Big] }
\newcommand{\rt}{[0,T]}
\newcommand{\rx}{\R^d}
\newcommand{\ry}{\R^n}
\newcommand{\rz}{\R^{n\times d}}
\newcommand{\subf}{S^f}
\newcommand{\tphi}{\tilde{\phi}}
\begin{document}

\title{A
Characterization of
Solutions of Quadratic BSDEs and a New Approach to Existence\footnote{During the preparation of this work the second author has been supported by the National Science Foundation under Grant No. DSM1815017 (2018-2021). Any opinions, findings and conclusions or recommendations expressed in this material are those of the author(s) and do not necessarily reflect the views of the National Science Foundation (NSF).}}

\author{
Joe Jackson\thanks{Department of Mathematics, The University of Texas at Austin, Austin, TX \texttt{jjackso1@utexas.edu}} \and Gordan \v{Z}itkovi\'c\thanks{Department of Mathematics, The University of Texas at Austin, Austin, TX \texttt{gordanz@math.utexas.edu}}%
}


\maketitle

\begin{abstract}
    We provide a novel characterization of the solutions of a quadratic BSDE, which is analogous to the characterization of local martingales by convex functions. We then use our main result to show that BSDE solutions are closed under ucp convergence. Finally, we use our closure result obtain a sufficient condition for existence, and discuss specific cases in which this sufficient condition can be verified. 
\end{abstract}

\section{Introduction}
Backward Stochastic Differential Equations (BSDEs) were first introduced by
Bismut \cite{Bis73} in 1973 in the linear case, and later generalized by Pardoux
and Peng \cite{Pardoux-Peng}, who studied them under general Lipschitz
conditions.  In this paper we are concerned with quadratic BSDEs, i.e., BSDEs of
the form 
\begin{align}
\label{bsde-intro}
	dY = f(t,Y,Z)\, dt + Z\, dB,
\end{align}
with $Y$ taking values in some $\R^n$ (systems of BSDEs), $Z$ in $\R^ {n\times
d}$,  whose driver $f$ has quadratic growth in $z$. In 2000, Kobylanski \cite{Kobylanski}
established the well-posedness of such one-dimensional quadratic BSDEs
with a bounded
terminal condition.
For a  survey of the most important results in this active field up to
2017, we refer the reader to  the textbook \cite{Zhang} of Zhang. 

Well-posedness for multi-dimensional quadratic BSDEs has
proved more challenging and is still a central open problem. The most general
results in this direction come   from Tevzadze \cite{Tevzadze} under a smallness
assumption, and from Xing and \v Zitkovi\' c \cite{xing2018} in the Markovian
setting. We refer the reader to 
\cite{HuTan16,JamKupLuo17,CheNam17,Nam19,HarRic19} and the references
therein for some other important contributions. 

\medskip

The importance of BSDEs in optimal stochastic control and stochastic game theory is
well documented  (see, e.g., the monographs  \cite{Pha09} or \cite{Car16} and
the references therein). It is  not as well known that quadratic BSDEs also
appear in stochastic differential geometry, as pioneered by Darling in
\cite{Darling}, based on the earlier work of Kendall, Picard and others (see,
e.g.,  \cite{Ken90, Ken94, Ken91b, Ken92a, Pic94} and the references therein). 
Let $\Gamma$ be an affine connection on a differentiable manifold $M$ which, for
the sake of simplicity, we take to be diffeomorphic to $\R^n$. Martingales on
$M$ (also called $\Gamma$-martingales) are manifold-valued processes  that 
generalize continuous local martingales and  share many of
their properties 
(see, e.g., \cite[Chapter IV]{Eme89} for an
overview). One such property is the following:  an $M$-valued process $Y$ is a
$\Gamma$-martingale if and only if $\phi(Y)$ is a submartingale whenever $\phi$
is a (locally defined) convex function (see \cite[Theorem 4.9, p.~43]{Eme89}
for a precise statement). As in the Euclidean case, this characterization gives
an analytic insight into various properties of martingales,  and can be used to
prove, among others, the fundamental result that the set of $\Gamma$-martingales
is closed under the ucp (uniform on compacts in probability)
convergence (see \cite[Theorem 4.43, p.~47]{Eme89}). 

The relationship between $\Gamma$-martingales and BSDEs stems from  the
simple observation that a process $Y$, adapted
with respect to a Brownian filtration and  taking values in $M$, is a
$\Gamma$-martingale if and only if it  solves the BSDE 
\begin{align} \label{martdriver}
	dY = f(Y,Z)\, dt + Z\, dB \text{ with }
	    f^i(y,z) = \frac{1}{2} \sum_{i_1,i_2 =1}^n \sum_{j = 1}^d
	    \Gamma^{i}_{i_1,i_2}(y) z^{i_1}_j  z^{i_2}_{j},
\end{align}
where $n$ is the dimension of the manifold, $d$ is the dimension of the driving
Brownian motion, and $\Gamma^{i}_{i_1, i_2}$ are the Christoffel symbols of the
affine connection on $M$.  Using this observation, Darling constructed
$\Gamma$-martingales with prescribed terminal conditions and studied their
properties.  In his analysis, he used the fact that the family of
$\Gamma$-martingales is closed under ucp convergence  as a key tool in
addressing the well-posedness of BSDEs of the special form \eqref{martdriver}
under geometric restrictions on the Christoffel symbols $\Gamma$. We note that the equation \eqref{martdriver} can be viewed as a BSDE on a manifold, written in local coordinates. This perspective is developed in \cite{blache1} and \cite{blache2}, where more general BSDEs on manifolds are treated, and the results of \cite{Darling} for the equation \eqref{martdriver} are extended considerably.

\subsection{Our results} Inspired by the connection between special
BSDEs and $\Gamma$-mar\-tin\-ga\-les
considered by Darling, we set out to investigate the following questions: In
which ways do solutions to \emph{general} BSDEs behave like martingales? Just as
Darling used a special class of BSDEs to describe martingales on manifolds, can
we tie a properly generalized notion of a martingale to a general
BSDE? Is there a geometric picture associated with such a generalization? 

Being the starting point in this program, the present paper answers a more
specific question: Is there a ``convex-function'' characterization of solutions
to BSDEs in the spirit of Emery? Indeed, since martingales are
characterized by the way they transform
under convex maps, can we do the same with solutions of BSDEs?  Of course,
classical convex functions -  which turn each martingale into a
local submartingale -
will not work for a BSDE with a general driver $f$.  We show that the suitable
notion is that of $f$-subharmonicity and, given an \ito{}-process $Y$, we give
the following characterization:  there exists a process $Z$ such that $(Y,Z)$
solves \eqref{bsde-intro} if and only if  $\phi(t,B,Y)$ is a submartingale
whenever $\phi$ is an $f$-subharmonic function. In the case $f\equiv 0$,
$f$-subharmonic functions coincide with $C^{1,2,2}$-convex functions,
while in the
``geometric'' case of Darling, they coincide with geodesically convex functions
on the manifold. The general case is considerably more complicated than either
of the two special cases mentioned above and corresponds to the class of
sufficiently smooth subsolutions to a fully nonlinear parabolic PDE.  We note
that our notion of $f$-subharmonicity resembles the notion of a Lyapunov
function introduced in \cite{xing2018}, but it is not strictly comparable to it
and its use is quite different. We note also that Proposition 2.4.1 of \cite{blache1} is similar in spirit to our Theorem \ref{main}, although in a much more restrictive geometric setting, where Emery's proof that martingales on manifolds are characterized by convex functions is easier to adapt. The supermartingale property under an
exponential transformation in dimension $n=1$ has been used by \cite{BarElk13}
to characterize certain classes of semimartingales and prove powerful closure
results under monotone convergence. These results also lead to an alternative
proof of  a part of Kobylanski's theorem. 

One of the most important features of $f$-subharmonic functions is that they
naturally come defined only locally, and that local $f$-subharmonic functions
cannot always be extended to a larger (full) domain. This property is well 
known
in the case of geodesically-convex functions on certain Riemannian manifolds,
even those diffeomorphic to a Euclidean space.  Interestingly enough, this
feature kicks in only in dimensions higher than $1$, and only with
drivers
exhibiting a truly quadratic growth in $z$. It also explains why our
characterization needs to be local in nature - sometimes, quite simply, there
are no non-trivial globally defined $f$-subharmonic functions. 

Dimensionality also plays a role when it comes to the most basic  
features of processes that become submartingales
under $f$-subharmonic transformations. In dimension $1$, such
processes are automatically \ito{}-processes,
but in higher dimensions additional structure is needed to rule out
the presence of singular drift components (see Corollary \ref{cor:ab}).  


\smallskip

After proving Theorem \ref{main}, we use our characterization to prove that, under appropriate conditions, the set
of processes $Y$ which solve
\eqref{bsde-intro} (for some terminal condition) is closed under ucp
limits. We note that this closure result holds in a high degree of generality when $n = 1$, but some additional structure is needed when $n > 1$, see Remark \ref{rmk:dimone}. This result, namely Theorem \ref{ucplimits},
is directly analogous to the parallel result about 
$\Gamma$-martingales on manifolds mentioned above. 

Finally, we present in sub-section 3.2 a ``template for existence", i.e. a sufficient condition for the existence of solutions to quadratic BSDE systems. This result, namely Proposition \ref{template}, shows how the geometric arguments in \cite{Darling} might eventually be applied to general quadratic systems, and its proof is based on the ucp closure result Theorem \ref{ucplimits}. We note that it is possible to verify the hypotheses of Proposition \ref{template} in certain special cases, see Remark \ref{rmk:template}.  

\subsection{Organization of the Paper.}\
In Section 2, we describe notation and assumptions, introduce the notion of
$f$-subharmonic functions and $f$-local martingales, and then state and prove
the main result of the paper. In Section 3 we show ucp-stability
solutions of BSDEs, and then
present a template for existence.   

\subsection{Notation and conventions.}
Throughout the paper, we work on a a fixed probability space $(\Omega, \FF,
\pr)$ which hosts a $d$-dimensional Brownian motion $B$ with a deterministic
time horizon $T < \infty$. The filtration $\FF = (\F_t)_{t
\in [0,T]}$ is the augmented filtration of $B$, while  the \emph{augmented}
filtration generated by a process $X$ is denoted by $\FF^X$.

Components of vectors in $\R^n$ get
indices based on $i$ (such as $i$ or $i'$), written as
superscripts. For $\R^d$-valued vectors, indices are based on $j$ and
are written as subscripts. Vectors in 
$\R^{n\times d}$ are interpreted as $n\times
d$-matrices, and we use indexing of the form $z=(z^i_j)$,
with the understanding that 
$z^i$ denotes the $i$-th row and $z_j$
$j$-th column. Indices based on $i$ always range from $1$ to $n$, and those
based on $j$ from $1$ to $d$. This way, for example, $\tsum_i = 
\tsum_{i=1}^n$ and $\tsum_{i',j} = \tsum_{i'=1}^n \tsum_{j=1}^d$. 
Finally, we \emph{do not} use the Einstein summation convention. 

The standard inner product $\ab{\cdot,\cdot}$ is used
on $\R^n$ and $\R^d$. In the matrix (i.e., $\R^{n\times d}$) case, we
use the Frobenius inner
product given by
$\ab{z, z'} = \sum_{i,j} z^i_j z'^i_j = \Tr 
 z^T z'$. In all three cases, we use the induced
norm,
denoted by $\abs{\cdot}$. 

$\sP$ denotes the set of all $\FF$-progressively measurable $\R^{n \times
d}$-valued processes, while $\sP^p$ is the set of all $Z\in \sP$ such that $
\int_0^T \abs{Z_t}^p\, dt<\infty$, a.s.  For $Z\in \sP^2$, $\int Z
\cdot dB$ is $\ry$-valued process defined by  $(\int Z\cdot dB)^i
\coloneqq \sum_{j} \int Z^i_{j} dB_j\ $. $\sS^p$ denotes the set of
all continuous adapted $\R^n$-valued processes $Y$ with $
\norm{Y}_{\sS^p}\coloneqq \norm{Y^*_T}_{\LL^p}<\infty$. $\bmo$
denotes the set of all processes
$Z\in \sP^2$ such that $\int Z\cdot dB$ is a BMO martingale, and
$\norm{Z}_{\bmo} \coloneqq \norm{\int Z\cdot B}_{\BMO}$, where $
\norm{\cdot}_{\BMO}$ denotes the standard $\BMO$ norm. 

\section{A characterization of solutions}
This section gives a novel characterization the class of processes $Y$ which
solve a given BSDE. A central role in this
characterization will be played by the class of $f$-subharmonic functions,
introduced below.

\subsection{Basic assumptions.}
Given a continuous \define{driver} $f:\rt\times\ry\times\rz\to\R^n$, a  pair of
processes $ (Y,Z)$ is said to be a \define{solution} to
\begin{equation} \label{bsde}
    Y^i_T = Y^i_t - \int_t^T f^i(s,Y_s,Z_s)\, ds + \int_t^T Z^i_s
    \cdot dB_s,\ 1\leq i \leq n,
\end{equation}
if $Y^i$ is a continuous $n$-dimensional $\FF$-semimartingale, $Z\in \sP^2$ and
\eqref{bsde} holds for all $1 \leq i \leq n$, and all  $t\in [0,T]$, a.s. Note
that our solution concept does not come with an a-priori terminal condition or
any regularity imposed on $Y$ or $Z$ beyond the bare minimum. In
particular, $\int Z\cdot dB$ is not necessarily a (true) martingale. 

In this paper we will abuse terminology and say that $Y$
(alone) is a \define{solution} to \eqref{bsde} if there exists $Z\in
\sP^2$ such that
$(Y,Z)$ is a solution in the sense described above. 

The following assumption can be viewed as a quantitative version of the requirement that $z \mapsto f(t,y,z)$ has quadratic growth for each $t$ and $y$.
 \begin{assumption}
\label{assumptions}

The driver $f:\rt\times\ry\times \rz \to \ry$ is jointly continuous and
there exists a continuous increasing function $\kappa : [0,\infty) \to [0,\infty)$ such that 
for all $(t,y) \in [0,T] \times \R^n$ and $z, z' \in \R^{n \times d}$, we have
\begin{align*}
       \abs{ f(t,y,z') - f(t,y,z) } 
       \leq 
       \kappa(|y|)\big(1 + \abs{z} + 
       \abs{z'}\big)
         \abs{z'-z}
\end{align*}
\end{assumption}

\subsection{The notion of $f$-subharmonicity.}
When $Y$ is a solution to \eqref{bsde} and $\phi : \rt\times\rx\times\ry \to \R$
is a $C^{1,2,2}$-function, \itos{}
formula implies that 
\begin{align}
  \label{ito}
  d \phi(t,B_t,Y_t) =  \sL^f \phi (t,B_t, Y_t; Z_t)\, dt + dL_t
\end{align}
where $L$ is a local martingale and
$\sL^f\phi : \rt\times\rx\times\ry\times\rz \to \R$, 
is defined by 
\begin{equation} \label{opdef}
  \begin{split}
      \sL^f \phi (t,x,y; z) &= -\dsum_i
      \phi_{y^i}(t,x,y)f^i(t,y,z)\\ & +
      \phi_t
      (t,x,z) + \frac{1}{2} \dsum_{i, i'} \phi_
      {y^{i} y^{i'}}(t,x,y) \ab{z^{i}, z^{i'}}  \\
      & + \dsum_{i,j} \phi_{x_j y^i}(t,x,y) z^i_j  + \frac{1}{2} 
      \dsum_{j}  \phi_{x_j x_j} (t,x,y).
\end{split}
\end{equation}
Therefore, if $\phi$ is a $C^{1,2,2}$-function with the
  property that $\sL^f \phi \geq 0$ everywhere, the composition 
$\phi(t,B_t,Y_t)$ is necessarily a local submartingale.
A local version of this property motivates the 
the following definition:
  \begin{defn}[$f$-subharmonic functions] \label{sub-def}
    A real valued $C^{1,2,2}$-function $\phi$ defined on
      an open set $\dom \phi$ of $\rt\times\rx\times\ry$
    is said to be \define{$f$-subharmonic} if 
    \[ \inf_{z\in \R^{n \times d}} \sL^f\phi (t,x,y; z) \geq 0 \text{ 
    for all $(t,x,y) \in \dom \phi$.}\] 
  \end{defn}
\noindent The set of all $f$-subharmonic functions is denoted by
$\subf$.

  \begin{rmk}
    Probabilistically, one can interpret $f$-subharmonicity
    as a sufficient condition for a function to transform any
    solution $Y$ of \eqref{bsde} to a submartingale ``while
    $(t,B_t,Y_t) \in \dom \phi$".
  To understand $f$-subharmonicity 
  better from the analytic point of view, we consider only the case where $\phi$
  does not depend on $t$ and start in the simplest setting 
  with $f=0$ and $n= d = 1$ in
  which 
  \[ \sL^0 \phi (t,x,y;z) = \frac{1}{2} \langle \zeta,   D^2
  \phi(t,x,y) \zeta \rangle \ewhere \zeta = (1,z) \eand \]
  $D^2 \phi(t,x,y)$ is the Hessian matrix of $\phi$ in 
  variables $x$ and $y$. It follows immediately
  that $\phi$ is $f$-subharmonic if and only if 
  $D^2 \phi$ is non-negative definite on $\dom \phi$, 
  which is, in turn, equivalent to joint convexity of $\phi$ in $x$
  and $y$ on $\dom \phi$. 
  
  When $n =1$ but $d > 1$, $f$-subharmonicity, even for $f=0$, no longer implies convexity.
  Indeed, in that case we have
  \[ \sL^0 \phi (t,x,y;z) =  \tot  \langle \xi, H \xi
    \rangle,\]
    where $\xi = (\abs{z}, z_1/ \abs{z}, \dots, z_n/\abs{z})$ (with $z_i/\abs{z}\coloneqq0$ when $\abs{z}=0$)  and 
    the matrix $H$ is given by
    \[ H = \begin{pmatrix} 
    \phi_{yy}  & \phi_{y x_1} & \phi_{ y x_2} & \dots & \phi_{y x_d} \\
    \phi_{x_1 y} & \Delta_x \phi & 0 & \dots & 0\\ 
    \phi_{x_2 y} &  0 & \Delta_x\phi &  \dots & 0\\ 
    \vdots & \vdots & \vdots & \ddots & \vdots \\
    \phi_{x_d y} &  0 & 0 & \dots & \Delta_x \phi \\ 
  \end{pmatrix}. \]
As above, $f$-subharmonicity is equivalent to non-negative
definiteness of the matrix $H$, but $H$ is no longer the Hessian of
$\phi$. It is, however, obtained from the Hessian $D^2 \phi$ by replacing the
submatrix of $(x_j, x_{j'})$-derivatives by a $\Delta_x \phi$-multiple
of the $n\times n$ identity matrix. When $\phi$ does not depend on
$y$, $0$-subharmonicity is equivalent to classical subharmonicity of
$\phi$, a property strictly weaker than convexity. In general, the
notion of $0$-subharmonicity can be interpreted as a ``convex
combination" of convexity and subharmonicity.

The general case, when $f$ does not necessarily vanish, is much
harder to interpret, primarily because the fact that the variable
$z$, which we used to test positive definiteness above no longer
separates from the rest. One special case where such a separation
does occur is when $f(t,x,y,\cdot)$ is a quadratic
form in $z$, i.e., 
\[ f(t,x,y,z) = \sum_{i,i'} F_{i, i'}(t,x,y) z^i z^{i'},\]
a particular case of 
which we have seen in \eqref{martdriver} in the Introduction. 
We hope it may shed some light on a possible deeper geometric
meaning of $f$-subharmonicity; it also  leads to non-negative
definiteness, but of 
a matrix $H'$ which is obtained from $H$ above by adding
certain first-order terms to it, just as in the
the coordinate representation of the covariant Hessian on a
differentiable manifold with an affine connection. 
\end{rmk}

  \subsection{The main result}
With the family $\subf$ of all
$f$-subharmonic functions introduced in Definition \ref{sub-def}
above, we give two more definitions whose primary purpose is to
improve presentation of our main theorem and its proof. Given a stochastic process $\Phi$ and a stopping time $\tau$, we denote by $\Phi^{\tau}$ the process $\Phi$ stopped at time $\tau$, i.e. $\Phi^{\tau}_t = \Phi_{\tau \wedge t}$. For two
stopping times $\tau_1\leq \tau_2$ and a stochastic process $\Phi$, we
define \[ \lmr{\tau_1}{\Phi_t}{\tau_2} \coloneqq \Phi^{\tau_2}_{t
\vee \tau_1 } -
  \Phi^{\tau_2}_{\tau_1} = 
  (\Phi_{t} - \Phi_{\tau_1}) 1_{[\tau_1, \tau_2)}(t) + 
(\Phi_{\tau_2} - \Phi_{\tau_1}) 1_{[\tau_2, T]}(t) =
\int_0^t \ind{ (\tau_1,\tau_2]}(u)\, d\Phi_u.
\]
We say that $Y$ is a \define{local submartingale on
$[\tau_1,\tau_2)$} if $\lmr{\tau_2}{Y}{\tau_1}$ is a local
submartingale. 
\begin{defn}[Local $f$-martingales] \ 
  \begin{enumerate}
    \item Given a function $\phi$ defined on an open subset $U$ of
  $[0,T]\times \R^m$, and an $\R^m$-valued continuous adapted process 
  $X$, we say that
  $Y_t=\phi(t,X_t)$ is \define{a local submartingale 
  while $(t,X)\in U$}
  if it is a local submartingale on each stochastic interval
  $[\tau_1,\tau_2)$ such that $(t,X_t) \in U$ for all $t\in
  [\tau_1,\tau_2)$, a.s. 
\item 
    A continuous adapted process $Y$ is called a
    \define{local $f$-martingale} if
      $\phi(t,B_t, Y_t)$ is a
      local submartingale while $(t,B,Y)\in \dom \phi$
      for each $\phi \in \subf$.
  \end{enumerate}
  \end{defn}
  The main result of the paper is the following characterization:
\begin{thm}[Main]
  \label{main}
  Under Assumption \ref{assumptions}, an \ito{} process $Y$ is a local $f$-martingale
  if and only if it solves the BSDE \eqref{bsde}.
\end{thm}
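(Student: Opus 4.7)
The plan is to prove each direction separately. The easy implication---that any solution of \eqref{bsde} is a local $f$-martingale---reduces to applying \itos{} formula to $\phi(t, B_t, Y_t)$ for each $\phi \in \subf$: the decomposition \eqref{ito} shows the drift equals $\sL^f \phi(t, B_t, Y_t; Z_t)\ge 0$, so $\phi(t, B_t, Y_t)$ is a local submartingale on any stochastic interval contained in $\{(t, B_t, Y_t) \in \dom \phi\}$, which is precisely the local $f$-martingale property.

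For the converse, I would write the \ito{} decomposition $dY^i = b^i\, dt + Z^i \cdot dB$ and aim to show $b_t = -f(t, Y_t, Z_t)$ for $dt \otimes d\PP$-a.e.\ $(t, \omega)$. Applying \itos{} formula to $\phi(t, B_t, Y_t)$ for a general $\phi \in \subf$ now gives drift
\[
\sL^f \phi(t, B_t, Y_t; Z_t) + \langle \nabla_y \phi(t, B_t, Y_t),\, b_t + f(t, Y_t, Z_t)\rangle,
\]
and the local submartingale property forces this to be non-negative a.e.\ on $\{(t, B_t, Y_t)\in \dom \phi\}$, simultaneously for every $\phi \in \subf$. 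The goal is then to produce test functions rich enough to collapse this family of inequalities to $b + f \equiv 0$. My strategy is to construct, at each Lebesgue point $(t_0, \omega_0)$ (writing $x_0 = B_{t_0}$, $y_0 = Y_{t_0}$, $z_0 = Z_{t_0}$) and each direction $v \in \R^n$, a ``near-$f$-harmonic'' test function $\phi = \phi_{v,\eta} \in \subf$ on a neighborhood $U_\eta$ of $(t_0, x_0, y_0)$ such that
\[
\nabla_y \phi(t_0, x_0, y_0) = v \quad \text{and} \quad \sL^f \phi(t_0, x_0, y_0; z_0) \le \eta.
\]
Such a $\phi$ can be built by fixing its second-order coefficients at the base point so that $z \mapsto \sL^f \phi(t_0, x_0, y_0; z)$ is a non-negative quadratic form with minimizer $z = z_0$ and minimum value $0$: this requires matching the first-order condition $\phi_{yy} z_0 + \phi_{xy} = v \cdot \partial_z f(t_0, y_0, z_0)$ and choosing $\phi_{yy}$ large enough to beat $v \cdot \partial_{zz}^2 f$ in PSD sense. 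One then adds a small $+\eta(t-t_0)$ to secure a robust $f$-subharmonicity margin on a small neighborhood $U_\eta$. Running the drift inequality for both $\phi_{v,\eta}$ and $\phi_{-v,\eta}$ yields $|\langle v, b_{t_0} + f(t_0, y_0, z_0)\rangle| \le \eta$; sending $\eta \to 0$ and varying $v$ then gives the desired identity at $(t_0, \omega_0)$, and a countable-density/measurable-selection argument upgrades this to the claimed a.e.\ statement.

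The main obstacle is executing the near-minimizer construction rigorously in the presence of the quadratic-in-$z$ growth of $f$. The naive quadratic ballast $\phi = \langle v, y-y_0\rangle + \lambda(|y-y_0|^2 + |x-x_0|^2 + (t-t_0))$ is too crude: forcing $\inf_z \sL^f \phi \ge 0$ pins $\lambda \gtrsim |v|M$, and then $\sL^f \phi(t_0,x_0,y_0;z_0) \gtrsim \lambda(1+|z_0|^2)$, which only produces an upper bound on $|b|$ in terms of $|Z|$ rather than $b + f = 0$. Overcoming this requires the more delicate PSD matching of $\phi$'s second-order data to $\partial_z f$ and $\partial_{zz}^2 f$ described above, together with a careful calibration of the buffer $\eta$ against the radius of $U_\eta$ so that $f$-subharmonicity persists throughout $U_\eta$. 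Compounding this is the passage from a pointwise conclusion at Lebesgue points to a genuine $dt \otimes d\PP$-a.e.\ identity, which requires a measurable selection over the parameters $(t_0, x_0, y_0, z_0, v, \eta)$ and standard localization by stopping times keeping $(t, B_t, Y_t)$ inside $U_\eta$.
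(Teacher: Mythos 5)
Your overall skeleton coincides with the paper's: the easy direction via \itos{} formula, and the converse by producing, near a point where the drift discrepancy lives, an $f$-subharmonic function with prescribed $y$-gradient $\pm v$ and with $\sL^f\phi$ nearly minimal at the observed $(\bt,\bx,\by,\bz)$, then reading off a sign contradiction from the submartingale property. (The paper localizes by a nested-cube/positive-measure argument and a single test function per component $i_0$, rather than your Lebesgue-point-plus-measurable-selection route, but that difference is cosmetic.) The genuine gap is in the one step you yourself flag as the main obstacle: the construction of the test function. Your proposed fix --- matching $\phi_{yy}z_0+\phi_{xy}$ to $v\cdot\partial_z f(t_0,y_0,z_0)$ and choosing $\phi_{yy}$ to dominate $v\cdot\partial^2_{zz}f$ in the PSD sense so that $z\mapsto\sL^f\phi(t_0,x_0,y_0;z)$ becomes a non-negative quadratic form vanishing at $z_0$ --- cannot be executed under Assumption \ref{assumptions}. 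First, $f$ is only continuous with a local Lipschitz bound in $z$; $\partial_z f$ and $\partial^2_{zz}f$ need not exist. Second, even for smooth $f$, $\sL^f\phi(t_0,x_0,y_0;\cdot)$ contains the term $-\langle\nabla_y\phi,f(t_0,y_0,\cdot)\rangle$ and is not a quadratic form in $z$, so pointwise Hessian matching at $z_0$ gives no control over $\inf_z\sL^f\phi$ globally in $z$, which is what $f$-subharmonicity demands. Third, the regularity hypothesis only yields an upper envelope for $f$ of ``cone-quadratic'' type, $C\abs{z-\bz}+C\abs{z-\bz}^2$ plus constants, whose kink at $\bz$ no smooth quadratic can match exactly while dominating it everywhere; this is precisely why the paper's Lemma \ref{quad} settles for $q(\bz)\le l(\bz)+\eps$ rather than equality, and why ``minimum value $0$'' is unattainable in general (only $\le\eta$ is, which is fortunately all you ask for in your displayed requirement, but your construction as described aims at the stronger, false, target).

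The paper closes this gap with an exponential Ansatz, $\phi=-\sum_{i\ne i_0}y^i+\sum_i\beta_i^{-1}E^i(x,y)+\theta\abs{x}^2$ with $E^i=e^{\beta^i(y^i-\by^i)+\ab{\gamma^i,x-\bx}}$, whose second-order data produce an exactly purely quadratic expression in $z$ at the base point; the coefficients are then calibrated through the cone-quadratic versus purely-quadratic comparison of Lemma \ref{quad}. No derivative of $f$ in $z$ is ever taken. Two further points you should attend to if you pursue your route: (i) away from the base point $\nabla_y\phi$ is only approximately $v$, and the error multiplies $b+f$, which is a priori unbounded; the paper handles this by first localizing to an event where $\abs{g}$ and $\abs{f(\cdot,Y,Z)}$ are bounded by a constant $K$ and then shrinking $\dom\phi$ so that the spurious components of $\nabla_y\phi$ are smaller than $\delta/(16nK)$; and (ii) converting ``local submartingale on every stochastic interval inside $\dom\phi$'' into ``drift $\ge0$ a.e.\ on the random set where $(t,B_t,Y_t)\in\dom\phi$'' requires the covering-by-stopping-times argument (Emery's lemma), which deserves to be spelled out rather than absorbed into ``standard localization.''
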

With the proof left for the following section, here are some comments
on the form, scope and conditions of Theorem \ref{main}. 
\begin{rmk}\ 
  \label{after-main}
  \begin{enumerate}
  \item 	As we show in Corollary \ref{cor:ab} below, 
  the a-priori assumption that $Y$ is an \ito{} process is not
  necessary if either $n=1$ or $f$ satisfies an additional structural condition; a local $f$-martingale is automatically a
  semimartingale and an \ito{}-process in that case. 
  An inspection of our proof
  of Theorem \ref{main} below reveals that when $n>1$, 
  local $f$-martingales correspond to possibly \emph{generalized} solutions of 
  \eqref{bsde} where a singular finite-variation process $A^i$ is
  added to each equation and treated as component of the solution. 
  \item 
  Theorem \ref{main} would remain valid if we replaced 
      the Brownian motion $B$ in the definition of $f$-subharmonic
  function by a Markov process in an appropriate class.
  Of course, the definition of an $f$-subharmonic
  function would have to be suitably modified. 
  We leave the problem of
  identifying the range of possible replacements for $B$ 
  for further research. 

  Clearly, full $\omega$-dependence cannot be expected even under
  strong adaptivity assumptions without  changing
  the nature of our result in a fundamental way. Indeed, it
  would require replacing the notion of an $f$-subharmonic 
  function by a random field, and, as such, could not be used
  to separate analytic and probabilistic aspects of BSDEs. 
\item 
  The notion of a solution of a BSDE at our level of generality can be
  thought of as a
  particular form of dependence between a process $Y$ and the
  given Brownian motion. Therefore, the variable $B$, or some
  surrogate, cannot be left out completely out of the definition of
  an $f$-subharmonic function. Indeed,  consider the case $n=1$, 
  $f(t,y,z) = z$, and the class $S\subseteq \subf$ consisting of all
  $\phi$ in $\subf$ that do not depend on $x$. 
  Whether or not a
  process $\phi(t,Y_t)$ 
  is a local submartingale for each $\phi\in S$ depends only on the
  distribution of $Y$, so it will be enough to exhibit two processes
  equal in distribution, such that one
  solves
  \eqref{bsde}, but the other does not. For example, we may simply
  take
  \[ Y_t = t+B_t \eand Y'_t = t + \int_0^t h_s\, dB_s,\]
  where $h$ is any $\set{-1,1}$-valued progressive process not 
  equal to $1$, $\lop$-a.e.
  It is clear that both of these are Brownian motions with a unit
  drift, and that the first one solves \eqref{bsde} with $Z=1$.
  The other one is not a solution since the only candidate
  for the  second component of the solution, namely
  $Z'_t = h_t$, does not satisfy \eqref{bsde}. 
 \end{enumerate} 
\end{rmk}

\subsection{Proof of Theorem \ref{main}.}\label{sse:proof} In this subsection, we provide a proof of Theorem \ref{main}. We then show how the hypotheses Theorem \ref{main} can be weakened when $n = 1$ or $f$ satisfies an additional structural condition. 
  We fix a driver $f$
  which satisfies Assumption \ref{assumptions},  and start with some auxiliary results. 

\medskip

Given  $\bz \in \R^{n \times d}$,  we will call a function of the form
$g(z) = a_0 + b_0 \abs{ z- \bz} + c_0 \abs{z - \bz}^2$
with $c_0\geq 0$
\define{cone-quadratic about $\bz$}, and a function of
the form $h(z) = d_0 + e_0 \abs{z - \bz}^2$ with $e_0 \geq 0$ 
\define{purely
quadratic about $\bz$}. We identify such functions with points in $\R^2 \times
[0,\infty)$ and $\R \times [0,\infty)$, respectively, defined by their
coefficients. This way we
can speak about neighborhoods around cone-quadratic or purely quadratic
functions in the following, simple, lemma whose proof we leave to the reader:
\begin{lem} \label{quad}
  For any $\eps>0$, $\bz\in\R^{n \times d}$ and  a cone-quadratic function $l$ 
  about $\bz$ there
  exists a purely quadratic function $q$ about $\bz$ and neighborhoods $Q$
  around $q$ and $L$ around $l$ such that
  \begin{enumerate}
    \item $\tl(z) \leq \tq(z)$ for all $z\in\R^n$, $\tl \in L$ and
      $\tq
      \in Q$, 
    \item $q(\bz) \leq l(\bz) + \eps$
  \end{enumerate}
Furthermore, $q$ can be chosen non-constant. That is, we can take $q(z) = d_0 + e_0|z - z_0|^2$ with $e_0 > 0$.  
\end{lem}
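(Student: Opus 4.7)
The plan is to parametrize each cone-quadratic function by its coefficients $(a_0,b_0,c_0)\in\R^2\times[0,\infty)$ and each purely quadratic function by $(d_0,e_0)\in\R\times[0,\infty)$, and to choose $(d_0,e_0)$ so that the difference $q(z)-l(z)$, viewed as the quadratic polynomial
\[
p(r) := (e_0-c_0)r^2 - b_0\, r + (d_0-a_0)
\]
in the single variable $r := |z-\bz|$, is not merely pointwise nonnegative but has strictly negative discriminant and strictly positive leading coefficient. Both of those are open conditions on the five coefficients, so the same inequality will automatically survive small perturbations of $l$ and $q$, yielding the required neighborhoods $L$ and $Q$.

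Concretely, I would first set $d_0 := a_0 + \eps/2$, which immediately secures $q(\bz) = d_0 < l(\bz) + \eps$, i.e.\ condition (2) with room to spare. I would then pick $e_0$ large enough that (i) $e_0-c_0>0$ (so that $p$ opens upward and, as a by-product, $q$ is non-constant), and (ii) $(e_0-c_0)(d_0-a_0) > b_0^2/4$ (so that the discriminant of $p$ is strictly negative). Both inequalities are achieved by any $e_0 > c_0 + b_0^2/(2\eps)$, for instance $e_0 := c_0 + 1 + b_0^2/\eps$, which gives the explicit bound $b_0^2 - 4(e_0-c_0)(d_0-a_0) \leq -b_0^2 - 2\eps < 0$. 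With these choices, $p(r) > 0$ for every $r \in \R$, hence $q(z) - l(z) = p(|z-\bz|) > 0$ uniformly in $z$.

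To finish, I would note that the two inequalities $e_0'-c_0' > 0$ and $(e_0'-c_0')(d_0'-a_0') > (b_0')^2/4$ are open conditions on the coefficient tuple $(a_0',b_0',c_0',d_0',e_0')$ and hold strictly at our chosen point. Small enough product neighborhoods $L$ of $(a_0,b_0,c_0)$ and $Q$ of $(d_0,e_0)$ therefore preserve both conditions, and the identical discriminant-and-leading-coefficient argument applied to $\tq(z)-\tl(z)$ produces $\tl(z)\leq\tq(z)$ for every $z$, every $\tl\in L$, and every $\tq\in Q$. There is essentially no hard step here; the one point requiring care is arranging \emph{uniform-in-$z$} strict positivity of $p$, which is precisely why the discriminant route is used in place of a pointwise-only inequality that would be too fragile to persist under perturbation.
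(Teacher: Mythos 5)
Your proof is correct and complete. The paper itself gives no argument for Lemma \ref{quad} (it is explicitly ``left to the reader''), so there is nothing to compare against; your reduction to a one-variable quadratic $p(r)$ in $r=|z-\bz|$ with strictly positive leading coefficient and strictly negative discriminant is exactly the kind of argument intended, and the key point --- that these are \emph{strict, open} conditions on the five coefficients, so the inequality survives perturbation of both $l$ and $q$, which is what makes the neighborhoods $L$ and $Q$ exist --- is handled correctly. The explicit choices $d_0=a_0+\eps/2$ and $e_0=c_0+1+b_0^2/\eps$ also give $q(\bz)\leq l(\bz)+\eps$ and $e_0>0$ as required.
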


The next lemma shows that $\phi$-subharmonic functions
exist locally, even when their behavior at a given point is
additionally restricted.
\begin{lem} \label{mainlem}
  For any $(\bt,\bx,\by,\bz)\in \rt\times\rx\times\ry\times\rz$,
  $1\leq i_0 \leq n$ 
  and $\epsilon > 0$, there
  exists an $f$-subharmonic function $\phi$ such that
  \begin{enumerate}
    \item $(\bt,\bx,\by)  \in \dom \phi$, 
    \item $\sL^f\phi(\bt,\bx,\by;\bz) \leq \epsilon$ and 
    \item $\phi_{y^{i_0}} (\bt,\bx,\by) = 1$ (or $-1$ if desired),
    and
    \item $\phi_{y^i} (\bt,\bx,\by) = 0$ for $i \ne i_0$.
    \end{enumerate}
\end{lem}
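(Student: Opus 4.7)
The plan is to construct $\phi$ as an explicit quadratic polynomial in $(x,y)$ with an affine dependence on $t$, whose Hessian coefficients are selected so that the $z$-dependent piece of $\sL^f\phi$ at the base point collapses into $-f^{i_0}(\bt,\by,z) + e_0\abs{z-\bz}^2 + \text{const}$, after which a purely-quadratic domination of $f^{i_0}$ provided by Lemma \ref{quad} forces the global nonnegativity. Set $a := f^{i_0}(\bt,\by,\bz)$ and $b_1 := M(1+\abs{\by}+2\abs{\bz})$. Using the Lipschitz clause of Assumption \ref{assumptions}(1) together with $\abs{z}\leq\abs{z-\bz}+\abs{\bz}$,
\[
    f^{i_0}(\bt,\by,z) \;\leq\; l(z) \;:=\; a + b_1\abs{z-\bz} + M\abs{z-\bz}^2,
\]
which is cone-quadratic about $\bz$ with $l(\bz)=a$. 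Applying Lemma \ref{quad} with tolerance $\epsilon/2$ produces a non-constant purely-quadratic $q(z)=d_0+e_0\abs{z-\bz}^2$ with $e_0>0$ and $d_0\leq a+\epsilon/2$, together with a neighborhood $L\ni l$ in coefficient space such that $\tilde l\leq q$ for every $\tilde l\in L$.

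On some open neighborhood of $(\bt,\bx,\by)$, define
\[
    \phi(t,x,y) \;=\; (y^{i_0}-\by^{i_0}) + e_0\abs{y-\by}^2 - 2e_0\sum_{i,j}\bz^{i}_{j}(x_j-\bx_j)(y^i-\by^i) + \alpha(t-\bt),
\]
with $\alpha:=d_0+e_0\abs{\bz}^2$. Direct differentiation yields $\phi_{y^{i_0}}(\bt,\bx,\by)=1$, $\phi_{y^i}(\bt,\bx,\by)=0$ for $i\neq i_0$, $\phi_{y^iy^{i'}}\equiv 2e_0\delta_{ii'}$, $\phi_{x_jy^i}\equiv -2e_0\bz^i_j$, $\phi_{x_jx_{j'}}\equiv 0$, and $\phi_t\equiv\alpha$, so properties (3) and (4) hold by construction and the second-order contributions to $\sL^f\phi$ telescope into $e_0\abs{z}^2 - 2e_0\ab{\bz,z} = e_0\abs{z-\bz}^2 - e_0\abs{\bz}^2$. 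At the base point,
\[
    \sL^f\phi(\bt,\bx,\by;z) = -f^{i_0}(\bt,\by,z) + \alpha + e_0\abs{z-\bz}^2 - e_0\abs{\bz}^2 \geq -q(z) + \alpha + e_0\abs{z-\bz}^2 - e_0\abs{\bz}^2 = -d_0+\alpha-e_0\abs{\bz}^2 = 0,
\]
while at $z=\bz$ the same expression evaluates to $d_0-a\leq\epsilon/2$, confirming (2).

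To propagate the nonnegativity of $\sL^f\phi$ to a full neighborhood $U$ of $(\bt,\bx,\by)$, write
\[
    \sL^f\phi(t,x,y;z) \;=\; -f^{i_0}(t,y,z) - \ab{\phi_y(t,x,y)-e_{i_0},\,f(t,y,z)} + \alpha + e_0\abs{z-\bz}^2 - e_0\abs{\bz}^2,
\]
and note that $\abs{\phi_y(t,x,y)-e_{i_0}}\leq 2e_0(\abs{y-\by}+\abs{\bz}\abs{x-\bx})$. Applying the Lipschitz estimate in $z$ at the anchor $\bz$, both $f^{i_0}(t,y,\cdot)$ and $\abs{f(t,y,\cdot)}$ admit cone-quadratic upper bounds in $z-\bz$ whose coefficients depend continuously on $(t,y)$ and tend to those of $l$ as $(t,y)\to(\bt,\by)$. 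Consequently the perturbed integrand $f^{i_0}(t,y,z)+\abs{\phi_y(t,x,y)-e_{i_0}}\cdot\abs{f(t,y,z)}$ is majorized by a cone-quadratic $\tilde l_{t,x,y}(z)$ whose coefficients tend to those of $l$ as $(t,x,y)\to(\bt,\bx,\by)$. Shrinking $U$ so that $\tilde l_{t,x,y}\in L$ throughout, Lemma \ref{quad} yields $\tilde l_{t,x,y}\leq q$ on $U$, and Cauchy--Schwarz together with the cancellation in the previous paragraph gives $\sL^f\phi\geq 0$ on $U\times\rz$. The $-1$ variant follows from the symmetric construction in which $y^{i_0}-\by^{i_0}$ is replaced by its negative and Lemma \ref{quad} is applied to the cone-quadratic upper bound on $-f^{i_0}(\bt,\by,\cdot)$.

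The main obstacle is the simultaneous demand that $\sL^f\phi$ be globally nonnegative in $z$ yet close to zero at $z=\bz$, subject to the rigidly prescribed first derivatives of $\phi$ at the base point. Lemma \ref{quad} is tailor-made for this tension: it trades the sharp but merely cone-quadratic (asymptotically linear) bound on $f^{i_0}$ for a purely-quadratic bound that is tight at $\bz$ up to $\epsilon/2$, and its neighborhood clause supplies the slack needed to absorb both the motion of $(t,x,y)$ away from the base point and the unavoidable deviation of $\phi_y$ from $e_{i_0}$.
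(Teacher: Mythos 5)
Your proof is correct, and its skeleton coincides with the paper's: both arguments dominate $f^{i_0}(\bt,\by,\cdot)$ by a cone-quadratic function, invoke Lemma \ref{quad} to replace it by a pure quadratic $q$ that is $\epsilon$-tight at $\bz$, choose the second-order data of $\phi$ so that the diffusion part of $\sL^f\phi$ reproduces $q$ at the base point (your choices $\phi_{y^iy^i}=2e_0$, $\phi_{x_jy^i}=-2e_0\bz^i_j$ are exactly the paper's $\beta_i=2e_0$, $\gamma^i_j=-2e_0\bz^i_j$), and then shrink the domain using continuity of the coefficients. The genuine difference is the Ansatz. The paper takes $\phi=-\sum_{i\neq i_0}y^i+\sum_i\beta_i^{-1}E^i(x,y)+\theta|x|^2$ with exponentials $E^i$; there the Hessian entries vary with $(x,y)$, which is precisely why Lemma \ref{quad} is stated with a neighborhood $Q$ around $q$ in addition to $L$ around $l$. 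Your quadratic polynomial has a constant Hessian, so the diffusion-plus-time part equals $q(z)$ identically on the whole domain and only the $L$-neighborhood is needed; the price is that $\phi_{y}$ now varies, producing the perturbation $\ab{\phi_y-e_{i_0},f}$, which you absorb via Cauchy--Schwarz and the quadratic growth bound on $|f|$ --- the counterpart of the terms $|E^i(x,y)+\delta^{ii_0}-1|(\cdots)$ in the paper's definition of $l$ in \eqref{ldef}. Both devices succeed for the same reason: the offending coefficient vanishes at the base point, so the perturbed cone-quadratic majorant still lands in $L$ after shrinking. One immaterial slip: the Lipschitz clause of Assumption \ref{assumptions} yields $b_1=M(1+2|\by|+2|\bz|)$ rather than $M(1+|\by|+2|\bz|)$; any valid cone-quadratic majorant works, so nothing downstream is affected.
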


\begin{proof}
  We make the following Ansatz
  \begin{align}
    \label{phi-form}
\phi(t,x,y) = -\sum_{ i \neq i_0} y^{i} + \dsum_{i}   \frac{1}{\beta^i} E^i(x,y) +
\theta |x|^2
\end{align}
where
\begin{align*}
    E^i(x,y) = e^{\beta^i (y^i - \by^i) + \ab{\gamma^i, x -
    \bx}},
\end{align*}
and the constants $\theta\in\R$, $\beta \in \R^n$ and 
$\gamma \in \R^{n\times d}$,
as well as the domain $\dom \phi$ will be determined later.
  It follows by direct computation that
  \begin{align*}
      \phi_{y^i}(t,x,y) &= E^i(x,y)  + \delta^{ii_0} -1, &   \phi_{y^iy^i}
      (t,x,y) &= \beta^i E^i(x,y), \\
    \phi_{x_j x_j} (t,x,y) &= \dsum_i\frac{(\gamma^i_j)^2}
    {\beta^i} E^i(x,y) + 2\theta, &
    \phi_{y^i x_j}(t,x,y) &= \gamma^i_{j} E^i(x,y),
  \end{align*}
  where $\delta^{\cdot\cdot} = \ind{\cdot = \cdot}$ is the Kronecker
  delta function,
  so that
  \begin{equation}
  \label{lcomp}
  \begin{split}
  \sL^f \phi(t,x,y;z) &=  - \dsum_{i} \big(E^i(x,y) +\delta^{ii_0}  - 1
  \big)f^i(t,y,z) + \frac{1}{2} \dsum_{i} \beta^i E^i(x,y) |z^i|^2  
  \nonumber \\   &\quad + \dsum_{i,j} \gamma^i_{j} E^i(x,y) z^i_j + 
  \frac{1}
  {2} \dsum_{i,j}
  (\gamma^i_{j})^2\frac{1}{ \beta^i} E^i(x,y) + \theta d.
  	\end{split}	
  \end{equation}
Next, we define the function $l:[0,T]\times \R^{d} \times \R^n \times
\R^{n \times d} \to \R$ by  
  \begin{equation}
  \label{ldef}
  \begin{split}
 l(t,x,y,z) &\coloneqq 
  \dsum_i
  |E^i(x,y) + \delta^{ii_0} - 1|\big( \abs{f^i(t, y, \bz) - 
f^i(\bt, \by, \bz)}  \\ 
  & \quad + C(\kappa(|y|) + 1) \big(\abs{z - \bz } + \abs{z - \bz }^2 \big)  + 
  \dsum_{i} (E^i(x,y) + \delta^{ii_0} - 1) f^i(\bt, \by, \bz),  
  \end{split}
  \end{equation}
where $C>0$ is a constant such that
\begin{multline}
  \label{fbnd}
  f^i(t,y,z) \leq 
  f^i(\bt, \by, \bz)  + \abs{f^i(t, y, \bz) - 
    f^i(\bt, \by, \bz)} 
     +C\kappa(|y|) \big(\abs{z - \bz } + \abs{z - \bz }^2 \big)
\end{multline}

for all $t,y,z$ and $i$.
Such a choice of $C$  - which exists thanks to  
Assumption \ref{assumptions} and repeated application of the
triangle inequality - leads to the following tight inequality:
  \begin{equation}
    \label{lineq}
    \begin{split}
  l(t,x,y,z) &\geq \tsum_i (E^i(x,y) + \delta^{ii_0} - 1) f^i(t,y,z), 
  \text{for all $t,x,y,z$, and
  }\\
  l(\bt, \bx, \by, \bz) &= \tsum_i (E^i(\bx, \by) + \delta^{ii_0} - 1) f^i
  (\bt, \by, \bz).
\end{split}
\end{equation}
The $(\bt,\bx,\by)$-section of the function $l$ is a cone-quadratic
function of $z$ about $\bz$; Lemma 
\ref{quad} applied to it yields a purely quadratic function 
$q(z) = d_0 + e_0 \abs{z-\bz}^2$ with $e_0 \neq 0$,  a neighborhood $Q$ around it and a
neighborhood $L$ around $l(\bt,\bx,\by,\cdot)$. 

We now choose $\beta^i, \gamma^i$, and $\theta$ so that 
\begin{align*}
    \tot \beta^i = e_0, \quad
    \gamma^i_{j} = -2e_0 \bz^{i}_j, \text{ and } 
   \frac{1}
  {2} \dsum_{i,j}
  (\gamma^i_{j})^2\frac{1}{ \beta^i} + \theta d = d_0 + e_0 \abs{\bz}^2.
\end{align*}
With this choice of constants, we have
\begin{align*}
   \frac{1}{2} \dsum_{i} \beta^i E^i(\bx,\by) |z^i|^2  
  + \dsum_{i,j} \gamma^i_{j} E^i(\bx,\by) z^i_j + 
  \frac{1}
  {2} \dsum_{i,j}
  (\gamma^i_{j})^2\frac{1}{ \beta^i} E^i(\bx,\by) + \theta d = q(z).
\end{align*}
The coefficients of $l$ are continuous as functions of $(t,x,y)$, so
there exists a neighborhood $U$ of $(\bt,\bx,\by)$ such that
$l(t,x,y,\cdot) \in L$, for all $(t,x,y)\in U$. Since
$E^i(\bx,\by)=1$ for each $i$, we can shrink this
neighborhood further, if needed, to guarantee that the map
\begin{align*}
    z \mapsto \frac{1}{2} \dsum_{i} \beta^i E^i(x,y) |z^i|^2  
  + \dsum_{i,j} \gamma^i_{j} E^i(x,y) z^i_j + 
  \frac{1}
  {2} \dsum_{i,j}
  (\gamma^i_{j})^2\frac{1}{ \beta^i} E^i(x,y) + \theta d.
\end{align*}
lies in $Q$ as soon as $(t,x,y) \in U$. 

If we declare $\dom \phi = U$ 
the first conclusion of Lemma \ref{quad} implies that $\phi$ is
$f$-subharmonic with $(\bt,\bx,\by) \in \dom \phi$.
For \emph{2.}, it suffices to use the equality in \eqref{lineq}  and the
second conclusion of Lemma \ref{quad}. Conditions \emph{3.}~and 
\emph{4.}~ follow from the computation $\phi_{y^i}(\bt, \bx, \by) = E^i(\bx,\by) + \delta^{ii'} - 1 = \delta^{ii'}$. To handle the
 constraint $\phi_{y^{i_0}}(\bt,\by, \bx) = -1$ we use 
 $E^{i_0}(x,y)^{-1}$ instead of $E^{i_0}(x,y)$ in the definition of
 $\phi$ and repeat the argument.
\end{proof}

\begin{proof}[Proof of Theorem \ref{main}]
By design, each solution $Y$ to \eqref{bsde} is a local
$f$-martingale, so we focus on the opposite implication and proceed
by contradiction: we assume that $Y$ is a local
$f$-martingale which is not a solution to
\eqref{bsde}.

By assumption, $Y$ is an \ito{}-process, i.e.,
admits a decomposition of the form  
\[ Y_t = Y_0 - \int_0^t g_s\, ds + \int_0^t Z_s \cdot dB_s,\] 
for some locally integrable $g$ and $Z\in \sP^2$. Since $Y$ does not solve
\eqref{bsde}, the processes $\int_0^{\cdot} g^{i_0}_s ds$ and $\int_0^{\cdot}
f^{i_0}(s,Y_s,Z_s) ds$ are not indistinguishable for some $i_0 \in 
\{1,2,...,n\}$, which means that 
\[ \lpB{g^{i_0} \neq f^{i_0}(\cdot, Y,Z)} \coloneqq \lpB{ (t,\omega) \, : \, g^{i_0}_t(\omega)
\neq f^{i_0}(t, Y_t(\omega), Z_t(\omega))} >0,\] 
where $\ld$ denotes the Lebesgue measure on $[0,T]$. Let us assume
first that 
\[ \lp{g^{i_0}>f^{i_0}(\cdot, Y,Z)}>0 , \]
and thus
\[ \lp{g^{i_0}>f^{i_0}(\cdot, Y,Z)+\delta}>0 \text{  for some
$\delta>0$.} \]
  By considering a countable partition, we can find a
cube $C_1 \subseteq [0,T] \times \R^d \times \R^n \times \R^{n \times d}$
with side length $T$ and a constant $K > 0$ such that 
\begin{align*} \lpB{ (\cdot, B,Y,Z)
\in C_1, \, g^{i_0} > f^{i_0}(\cdot, Y,Z)+\delta, \, |g| < K, |f(\cdot, B, Y, Z)| < K}>0.  
\end{align*} 
Partitioning $C_1$ yields a cube $C_2$ of side length $\frac{T}{2}$
with the
same property, and, iteratively, we can construct a nested sequence of
cubes $C_n$ whose diameters go to zero and such that 
\begin{align*}
\lpB{ (\cdot, B,Y,Z)
\in C_n, \, g^{i_0} > f^{i_0}(\cdot, Y,Z)+\delta, \, |g| < K, |f(\cdot, B, Y, Z)| < K}>0
\end{align*} 
We choose a point $(\bt,\by, \bx, \bz) \in \cap_n C_n$, and note that any
neighborhood $\sV$ of $(\bt,\by, \bx, \bz)$ contains some $C_n$, and hence
satisfies
\begin{align} 
\label{V-neigh}
\lpB{ (\cdot, B,Y,Z)
\in \sV, \, g^{i_0} > f^{i_0}(\cdot, Y,Z)+\delta, \, |g| < K, |f(\cdot, B, Y, Z)| < K}>0.
\end{align} 
Given the constant $\delta$ and the point $(\bt,\bx,\by, \bz)$ 
constructed above,
Lemma \ref{mainlem} guarantees the existence of an $f$-subharmonic function
$\phi$ with $(\bt,\bx,\by) \in \dom \phi$ such that 
\[ \phi_{y^{i_0}}(\bt, \bx,\by) = 1, \phi_{y^i}(\bt,\bx,\by) = 0 \text{ for } i \neq {i_0}, \eand \sL^f \phi (\bt,\bx,\by;\bz) < \delta/16.\]
By shrinking $\dom \phi$ to a smaller neighborhood of $(\bt,\bx,\by)$, if
necessary,  we can assume further that $\phi_{y^{i_0}} > \frac{1}{2}$ on $\dom \phi$, $\phi_{y^i} < \frac{\delta}{16 n K}$ on $\dom \phi$ for $i \neq k$, and we can find a  constant $r>0$   such that $\sL^f \phi \leq
\delta/8$ on $\dom \phi \times B_r(\bz)$, where $B_r(\bz)$ is the open ball in
$\R^{n \times d}$ of radius $r$ around $\bz$.

Next, we select a neighborhood $V$ of $(\bt,\bx,\by)$ with $\Cl V \subseteq \dom\phi$
so that the set $W = \rt\times\rx\times\ry\setminus \Cl V$ satisfies
\[ W \cup \dom\phi = [0,T] \times \R^{d} \times \R^n \eand W
\cap V = \emptyset.\] Here we denote by $\Cl V$ the closure of $V$. The process $(t,B_t,Y_t)$ is continuous
and $\set{\dom \phi, W}$ is  an open cover of $[0,T] \times \R^{d} \times \R^n$
so there exists a nondecreasing 
sequence $\{T_k\}_{k\in\N}$ of $[0,T]$-valued 
stopping times with the following properties 
(see, e.g., \cite[Lemma 3.5, p.~22]{Eme89}) :
\begin{enumerate}
  \item $\PP[ T_k < T] \to 0$, as $k\to\infty$
  \item the path $(t,B_t,Y_t)$ lies entirely in $\dom \phi$ or entirely in $W$ 
    on each stochastic interval $[T_k, T_{k+1})$.
\end{enumerate}
Since $[0,T] =
\cup_k [T_k, T_{k+1})$ a.s., equation \eqref{V-neigh} above guarantees the 
existence of an index $k_0\in\N$ such that
\begin{multline}
      \lpB{ \big\{(t,\omega) : t \in [T_{k_0}(\omega), T_{k_0+1}(\omega)) \big\} \cap \Bset{ (\cdot, B,Y,Z)
\in V \times B_r(\bz), \\ g^{i_0} > f^{i_0}(\cdot, Y,Z)+\delta, \, 
|g| < K, |f(\cdot, B, Y, Z)| < K}} >0
\end{multline}
 
Moreover, because $W  \cap V = \emptyset$, the stopping times 
\[ \tau_1 = T_{k_0} \eand
\tau_2 = T_{k_0+1} \wedge \inf \{t \geq \tau_1 :
      (t,B_t,Y_t) \notin \dom \phi\} \]
have the property that $(t,B_t, Y_t) \in \dom\phi$ on 
$[\tau_1, \tau_2)$ as well as
\begin{align}
\label{tau-sK}
\lpB{ \big\{ (t, \omega) : t \in [\tau_1(\omega), \tau_2(\omega))\big\} \cap \sK} >0 
\end{align}
where
\begin{multline}
\label{sK}
    \sK= \Big\{ (\cdot, B,Y,Z)
\in V \times B_r(\bz), \, : \,  g^{i_0} > f^{i_0}(\cdot, Y,Z)+\delta,
\, |g|
< K, \\ 
|f(\cdot, B, Y, Z)| < K\Big\}.
\end{multline}
The drift  of the process $\phi(t,B_t,Y_t)$ can be written as
\begin{multline*}
 \mu_t = \sL^f \phi(t,B_t, Y_t; Z_t) + \phi_{y^{i_0}}
(t,B_t,Y_t)\Big
(f^{i_0}(t,B_t, Y_t) - g_t^{i_0}\Big) + \\ + \sum_{i \neq i_0} \phi_
{y^i}
(t,B_t,Y_t)\Big(f^i(t,B_t, Y_t) - g_t^i \Big)
\end{multline*}
and using the bounds on its terms enforced by the construction of the domain 
$\dom \phi$ above, we find that
\begin{align}
\label{mu-leq}
\mu_t \leq  \delta/8  -  \delta/2 + \delta/8 = - \delta/4,\  \lop\text{-a.e.~on }\sK.
\end{align}
On the other hand, 
$Y$ is a local $f$-martingale so 
\begin{align}
\label{mu-geq}
\mu_t \geq 0,\  \lop\text{-a.e.~on } [\tau_1, \tau_2),
\end{align}
which, thanks to \eqref{tau-sK}, contradicts \eqref{mu-leq}.

To deal with the case  $\lp{g<f(\cdot,Y,Z)}>0$, we use  essentially
the same argument, but with $\phi_{y^{i_0}}(\bt,\bx,\by)=-1$.
\end{proof}
In anticipation of the closure result Theorem \ref{ucplimits} below, we wish to identify conditions under which the assumption $Y$ is an \ito \,process can be removed in the statement of Theorem \ref{main}. In general, it seems to difficult to rule out the possibility that a local $f$-martingale has singular drift when $n > 1$. But, when the driver $f$ has some additional structure, we can indeed guarantee that local $f$-martingales are It\^o processes. One structure which works is (a local version of) the condition (AB), introduced in \cite{xing2018}. We recall the definition here, adapted slightly to meet our needs. 

\begin{defn}
A driver $f$ is said to \define{satisfy the condition (AB)} there exists a finite collection $\sam = (a_1,\dots, a_M)$ of
  vectors in $\R^n$ such that
  \begin{enumerate}
    \item $a_1,\dots, a_M$ positively span $\R^n$ 
    \item $ a_m^T f(t,y, z) \leq k + \tfrac{1}{2} \abs{a_m^T z}^2$
    for each $m$, for all $t,y,z \in [0,T] \times  \R^n \times \R^{n \times d}$. 
  \end{enumerate}
  We say that $f$ \textbf{satisfies the condition (AB) locally} if for each $k \in \N$, the driver $f^k(t,y,z) \coloneqq f(t,\pi_k(y),z)$ satisfies the condition (AB), where $\pi_k$ is the orthogonal projection onto the ball of radius $k$ in $\R^n$. 
\end{defn}

\begin{rmk} \label{rmk:dimone}
When $n = 1$, Assumption \ref{assumptions} already implies that $f$ satisfies the condition (AB) locally. Thus when $n = 1$, the hypothesis that $f$ satisfies the condition (AB) locally can be omitted in the statements of the following Lemma and Corollary;. 
\end{rmk}

\begin{lem} \label{lem:absemimart}
Suppose that $f$  satisfies the condition (AB) locally . Then each local $f$-martingale is automatically an It\^o process. 
\end{lem}

\begin{proof}
By a localization argument, we may assume that $f$ satisfies the condition (AB). Let $k$, $\sam$ be as given in the definition of the condition (AB). For each $m$, define $\phi : [0,T] \times \R^n \to \R$ by 
\begin{align*}
    \phi_m(t,x,y) = \phi_m(t,y) = \exp(2 a_m^T y + 2kt). 
\end{align*}
A computation shows that $\sL^f \phi_m(t,x,y,z) = \exp(2 a_m^T y + 2kt) \big( -2 a_m^T f(t,y,z) + 2k + 2 |a_m^T z|^2\big)$. Since $a_m^T f(t,y,z) \leq k + |a_m^T z|^2$, we see that 
\begin{align*}
    -2 a_m^T f(t,y,z) + 2k + 2 |a_m^T z|^2 \geq 0, 
\end{align*}
and so $\phi_m$ is an $f$-subharmonic function for each $m$. Since $Y$ is a local $f$-martingale by hypothesis, it follows that $\phi_m(t,Y) = \exp(2a_m^T Y + 2kt)$ is a local submartingale for each $m$, hence a semimartingale. The map $(t,y) \mapsto (t,\exp(2y + 2kt))$ is invertible with $C^2$ inverse, so we conclude that $a_m^T Y$ is a semimartingale for each $m$. Since $\sam$ spans $\R^n$, this shows that $Y$ is a semimartingale. 
\medskip
Now that we know $Y$ is a semimartingale, we represent as $Y = Y_0 - A_t + \int_0^t Z_s \cdot dB_s$, for a constant $Y_0$, continuous finite-variation process $A$, and $Z \in \sP^2$. The finite-variation part of the process $\phi_m(t,Y)$ is given by  $D - E$, where
\[
D =  2 \int_0^{\cdot} \exp(2a_m^T Y + kt)\big(|a_m^T Z|^2 + k\big) dt, \, \, E = -2 \int_0^{\cdot} \exp(2a_m^T Y + kt) d(a_m^T A). 
\]
Since $\phi_m(t,Y)$ is a submartingale, we see that $E \preceq D$ a.s. Thus we can find for each $m$ an absolutely continuous process $H^m$ such that $a_m^T A \preceq H^m$. Since $\sam$ positively spans $\R^n$, for each basis vector $e_i$ of $\R^n$, we may find non-negative scalars $\lambda_1,...,\lambda_M$ such that $e_i = \sum_m \lambda_m a_m$, and so 
\begin{align*}
    A^i = \sum_{m} \lambda_m a_m^T A^i \preceq \sum_m \lambda_m H^m. 
\end{align*}
But the same argument works for $-e_i$, i.e. we may find positive constants $\lambda_m'$ such that $-e_i = \sum_m \lambda_m'$, which leads to  
\begin{align*}
    - A^i = \sum_m \lambda_m (a_m')^T A^i \preceq \sum_m \lambda_m' H^m. 
\end{align*}
We conclude that 
\begin{align*}
    - \sum_m \lambda_m' H^m \preceq A^i \preceq \sum_m \lambda_m H^m, 
\end{align*}
a.s., and so for each $i$, $A^i$ has absolutely continuous paths, a.s. The argument is completed by applying standard theory to produce a progressive density for $A$ (see, e.g., \cite[Proposition
3.13, p.~30]{JacShi03}). 
\end{proof}

Thanks to Theorem \ref{main}, we get the following Corollary. 

\begin{cor} \label{cor:ab}
If $f$  satisfies the condition (AB) locally and Assumption \ref{assumptions} holds, then a process $Y$ is a local $f$-martingale if and only if it solves \eqref{bsde}. 
\end{cor}

\begin{rmk}
We note that other structural conditions can be used to guarantee that local $f$-martingales are automatically \ito \,  processes. For example, if $f$ is triangular in the sense of Definition 2.10 of \cite{xing2018}, then an argument very similar to the one appearing in the proof of Lemma \ref{lem:absemimart} shows that any local $f$-martingale is an It\^o process.
\end{rmk}

\section{Applications}

\subsection{Stability of Solutions to BSDE} The set of solutions to
\eqref{bsde} shares several properties with the set of continuous
local martingales. We focus
here on one which concerns stability under ucp
convergence and show how it follows from Lemma \ref{cor:ab}.  We remind the reader that a sequence
$\sq{Y^k}$ of continuous processes converges \emph{uniformly on
compacts in probability (in ucp)}, denoted by $Y^k \toucp Y$, if
\[ \sup_{t\in [0,T]} \abs{Y^k_t - Y_t} \to 0 \text{ in probability.}\]

\begin{thm}\label{ucplimits}
Suppose that Assumption \ref{assump2} holds, and that $f$ satisfies the condition (AB) locally. Then the set of solutions to \eqref{bsde} is closed under ucp convergence.
\end{thm}

\begin{proof}
Let $\{Y_k\}_{k \in \N}$ be a sequence of solutions of \eqref{bsde} such that
$Y^k \toucp Y$. Clearly the limit process $Y$ is continuous and
adapted. To show that it solves \eqref{bsde} via Corollary \ref{cor:ab}, we 
pick $\phi \in S^f$ and a pair of stopping times $\tau_1,
\tau_2$ such that $(t,Y_t,B_t) \in \dom \phi$ for $t\in
[\tau_1, \tau_2)$, a.s. In fact, it is clear from the proofs of Theorem \ref{main} and Lemma \ref{lem:absemimart} we need only consider $\phi$ such that there exists $\tilde{\phi} \in S^f$ with $\Cl \dom \phi \subset \dom \tilde{\phi}$, $\phi = \tilde{\phi}$ on $\dom \phi$, and $\phi$ is Lipschitz on $\dom \phi$. 

For $k\in\N$, let the stopping time $S_k$ be defined by  \[ S_k \coloneqq
\tau_2 \wedge \inf \{t \geq \tau_1 : (t,B_t, Y_t^k) \notin
\dom\tphi\},\]
so that $(t,B, Y^k_t) \in \dom \tphi$ for $t\in [\tau_1, S^k)$, and,
consequently $\phi(t,B_t, Y^k_t)$ is a local
submartingale on $[\tau_1, S^k)$. 

Since $\Cl \dom\phi \subseteq \dom\tphi$ and
$(t,B_t,Y^k_t) \toucp (t,B_t,Y_t)$, we necessarily have 
$\pr[S_k \ne \tau_2] \to
0$. Together with the fact that $(t,B_t,Y_t^k) \toucp (t,B_t,Y_t)$, this implies that
\[ \lmr{\tau_1}{ (t,B_t,Y^k_t)}{S^k} \toucp \lmr{\tau_1}{(t,B_t,Y_t)}{\tau_2}.\]
Now since $\phi$ is Lipschitz, it follows that 
\[ \lmr{\tau_1}{ \phi(t,B_t, Y^k_t)}{S^k} \toucp \lmr{\tau_1}{
\phi(t, B_t, Y_t)}{\tau_2}.\]
It remains to use the fact that the class of local submartingales is closed under the ucp convergence. 
\end{proof}

\subsection{A template for existence}
We turn to another application of our main result Theorem \ref{main},
namely to the existence of solutions to \eqref{bsde} in dimension $1$. In order to highlight where the one-dimensional structure is used (and hence what difficulties must be overcome to extend to systems), we start by proving a sufficient condition for existence which holds in any dimension. In the remainder of the paper, we will work with the following standard assumption.
\begin{assumption}[Growth and regularity of the driver] 
\label{assump2}

The driver $f:\rt\times\ry\times \rz \to \ry$ is jointly continuous and
there exists a constant $M$ such that 
\begin{enumerate}
  \item (Regularity) for all 
    $t \in [0,T]$ as well as $(y,z), (y',z') \in \ry\times \rz$, we have
     \[
       \abs{ f(t,y',z') - f(t,y,z) } 
       \leq 
       M\abs{y - y} + M(1 + |y| + |y'| + |z| + |z'|) \abs{z - z'}
\]
    \item (Growth) for all $t,y,z\in\rt\times \ry \times \rz$, 
      \[ \abs{f(t,y,z)} \leq  + M(1 + \abs{y} + \abs{z}^2\big).\]
\end{enumerate} 
\end{assumption}
The main idea behind our
proof is inspired by the work of Darling
\cite{Darling} in stochastic differential geometry, which is, in turn,
based on ideas of Picard \cite{Pic94}.
We start in a standard way and build a sequence of approximate
equations with Lipschitz coefficients:
let $\pi_R$ denote the
orthogonal projection $\pi_R:\R^d \to \R^d$ onto the
closed ball of radius $R$ centered at the origin.  
We set
\[ \fk=f(\cdot,\cdot, \pi_k(\cdot)), \]
and note that $\fk$ is 
uniformly Lipschitz in $(y,z)$ and that it satisfies Assumption 
\ref{assump2} with the same constant $M$ as $f$.
Standard theory (see, e.g. \cite[Theorem
4.3.1, p.~84]{Zhang}) guarantees existence of a unique solution $(\ryk,
\Zk)$ to 
\begin{align}
\label{Yn-fn}
d\ryk = - \fk(\cdot,\ryk, \Zk)\, dt + \Zk\cdot dB,\  \ryk_T = \xi,
\end{align}
with $\Zk \in \sL^2$. 

\begin{defn}
 We will say that the sequence $(Y^k,Z^k)$ is a \textbf{bounded approximation scheme} if $\sup_k \big(\norm{Y^k}_{\sS^{\infty}} + \norm{Z^k}_{\bmo}\big) < \infty$. We call the sequence $(Y^k,Z^k)$ \textbf{uniformly integrable in probability} if $\pr[\int_0^T |Z^k|^21_{|Z^k|^2 > 1/k} dt > k] \to 0$ as $k \to \infty$. 
\end{defn} 

\begin{defn}
We say that the driver $f$ is \textbf{ucp stable} if the following holds: if $(Y^k,Z^k)$ is a sequence of solutions to \eqref{bsde} with $\sup_k \norm{Z^k}_{\bmo} < \infty$, and $\{Y^k_T\}_{k \in \N}$ is Cauchy in $\LL^p$ for all $1 \leq p < \infty$, then $\{Y^k\}_{k \in \N}$ is Cauchy with respect to ucp convergence. 
\end{defn}
 
\begin{prop}[Template for existence] \label{template}
Suppose that $f$ is ucp stable, satisfies Assumption \ref{assump2}, and satisfies the condition (AB) locally. Suppose further that the sequence $(Y^k,Z^k)$ constructed above is bounded and uniformly integrable in probability. Then, there exists a solution $(Y,Z) \in \sS^{\infty} \times \bmo$ to \eqref{bsde}. 
\end{prop}

Before proving Proposition \ref{template}, we use the solutions $(\ryk, \Zk)$ defined above to introduce the following
sequence
of \emph{forward} SDEs with random coefficients:
\begin{equation}
  \label{SDE}
  \rxk_0 = \ryk_0,\ d\rxk = -\fk(\cdot,\rxk,\Zk )\, dt + \pi_k
  (\Zk)\cdot \, dB.
\end{equation}
Standard theory (see, e.g., \cite[Theorem 3.3.1, p.~68]{Zhang} and \cite[Theorem 3.4.3, p.~72]{Zhang})
guarantees that \eqref{SDE} 
has a unique strong solution $\rxk$ which lies in $\cap_{p \geq 1} \mathcal{S}^p$. 
A key observation here is that, since $\pi_k$ is a projection, the
pair $(\rxk, \pi_k(\Zk))$ satisfies the same equation \eqref{Yn-fn} as $(\ryk, \Zk)$. Of
course, the terminal value $\rxk_T$
of $\rxk$ 
will, in general, differ from $\ryk_T=\xi$. The following Lemma follows from standard stability results for SDEs, together with the assumption that $(Y^k,Z^k)$ is bounded and uniformly integrable in probability.  
\begin{lem} \label{stability}
  Under the hypotheses of Proposition \ref{template}, and with $\ryk$ and $\rxk$ defined in \eqref{Yn-fn} and \eqref{SDE}
  above,  we have  \[ \ryk - \rxk \to 0 \text{ in $\sS^p$ for each
  $p \in [1,\infty)$.} \]
    In particular $\rxk_T \to \xi$, in $\LL^p$ for each $p\in
    [1,\infty)$. 
\end{lem}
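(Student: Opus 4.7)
The plan is to analyze the difference process $D^k \coloneqq \ryk - \rxk$ directly. Subtracting \eqref{SDE} from \eqref{Yn-fn} yields $D^k_0 = 0$ and
\[ dD^k_t = -\bigl(\fk(t,\ryk_t,\Zk_t) - \fk(t,\rxk_t,\Zk_t)\bigr)\, dt + \bigl(\Zk_t - \pi_k(\Zk_t)\bigr)\cdot dB_t. \]
Assumption \ref{assumptions}(1) forces $\fk$ to be $M$-Lipschitz in its $y$-argument (uniformly in $t,z,k$), so the drift can be written as $-\alpha^k_t D^k_t\, dt$ for a progressive process $\alpha^k$ with $|\alpha^k| \leq M$. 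Variation of constants then gives $e^{A^k_t}D^k_t = \int_0^t e^{A^k_s} (\Zk_s - \pi_k(\Zk_s))\cdot dB_s$, where $A^k_t = \int_0^t \alpha^k_s\, ds \in [-MT, MT]$; combining this identity with the BDG inequality yields, for each $p < \infty$, a universal constant $C_p$ such that
\[
   \norm{D^k}_{\sS^p}^p \leq C_p\, \EE\Big[\Big(\int_0^T \abs{\Zk - \pi_k(\Zk)}^2\, dt\Big)^{p/2}\Big].
\]
The whole task thus reduces to showing that the expectation on the right tends to $0$ as $k\to\infty$.

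The key observation at this point is that $\abs{\Zk - \pi_k(\Zk)}$ vanishes on $\{\abs{\Zk} \leq k\}$ and is bounded by $\abs{\Zk}$ elsewhere, so
\[ \int_0^T \abs{\Zk - \pi_k(\Zk)}^2\, dt \leq \int_0^T \abs{\Zk}^2 \inds{\abs{\Zk}^2 > k^2}\, dt. \]
For any $\eps > 0$ and any $k$ large enough that $k^2 > 1/\eps$, Proposition \ref{coup-unif} applies to $(\ryk, \Zk)$ -- a valid $\sS^\infty\times\bmo$-solution of \eqref{bsde} for the driver $\fk$ (which obeys Assumption \ref{assumptions} with the same constant $M$) and the terminal condition $\xi$ -- and delivers
\[ \PP\Big[\int_0^T \abs{\Zk - \pi_k(\Zk)}^2\, dt > \eps\Big] \leq \kappa(\eps), \]
so this integral tends to $0$ in probability.

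To upgrade this to $L^{p/2}$-convergence I would invoke the uniform $\bmo$-bound in \eqref{exp-bounds}: by the John--Nirenberg inequality for $\bmo$-martingales it produces uniform exponential moments of $\int_0^T \abs{\Zk}^2\, dt$, and in particular uniform-in-$k$ bounds on $\EE[(\int_0^T \abs{\Zk}^2\, dt)^q]$ for every $q < \infty$. Uniform integrability then upgrades convergence-in-probability to convergence in $L^{p/2}$ for each $p$, finishing the proof that $\norm{D^k}_{\sS^p}\to 0$. The ``in particular'' claim is then immediate from $\rxk_T = \xi - D^k_T$. The only delicate step is the upgrade from convergence-in-probability to $L^{p/2}$-convergence of the tail integral; it is precisely the uniform $\bmo$-bound -- the pay-off from the exponential-transform estimate used to construct $\ryk$ -- that makes this upgrade work.
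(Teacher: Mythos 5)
Your proposal is correct and follows essentially the same route as the paper: linearize the drift in $y$ via the Lipschitz bound, remove it with the exponential integrating factor $e^{A^k}$, reduce everything to the tail integral $\int_0^T \abs{\Zk}^2\inds{\abs{\Zk}^2>k^2}\,dt$, kill it in probability with Proposition \ref{coup-unif}, and upgrade to $\sS^p$ using the uniform $\bmo$ bound from \eqref{exp-bounds} together with BDG and uniform integrability. The only cosmetic difference is that the paper applies the Vitali-type argument to $(\Nk)^*_T$ after BDG while you apply it to the bracket before BDG; both are valid.
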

Now we report the proof of Proposition \ref{template}. 
\begin{proof}[Proof of Proposition \ref{template}.]\ 
Lemma \ref{stability}, together with the assumption that $f$ is ucp stable, implies that $\{X^k\}_{k \in \N}$ is Cauchy with respect to ucp convergence. Thus we identify a process $Y$ such that $X^k \toucp Y$. Since $X^{k}_T \to \xi$ in $\LL^{p}$, it necessarily follows that $Y_T = \xi$. Applying Theorem \ref{ucplimits} shows that $Y$ is a solution to \eqref{bsde} with terminal condition $\xi$. Standard estimates then give the desired regularity. 
\end{proof}

\begin{rmk} \label{rmk:template}
There are various conditions which guarantee that the sequence $(Y^k,Z^k)$ is bounded, even for systems. For example, if $f$ satisfies the condition (AB), then $(Y^k,Z^k)$ is bounded - see the proof of Theorem 2.14 in \cite{xing2018}. However, when $n > 1$ we do not currently have simple conditions which guarantee $f$ is ucp stable or that the sequence $(Y^k,Z^k)$ is uniformly integrable in probability - this is an issue we hope to address in future research. However, we note that the hypotheses of Proposition \ref{template} can be verified when $f$ is the geometric driver appearing in \eqref{martdriver}, at least under certain geometric conditions on the corresponding connection - this is exactly the program carried out in \cite{Darling}. It is also possible to verify the hypotheses of Proposition \ref{template} when $n = 1$, again under appropriate technical assumptions. Indeed, when $n = 1$ ucp stability can be shown through a change of measure argument (see for example the proof of Proposition 2.3 in \cite{Briand-Elie}), while uniform integrability in probability can be obtained by adapting the coupling argument in \cite{Darling}.
\end{rmk}

\bibliographystyle{amsalpha}
\bibliography{bsde}

\end{document}